\newtheorem{theorem}{Theorem}[section]
\newtheorem{remark}{Remark}[section]
\newtheorem{lemma}[theorem]{Lemma}
\newtheorem{pro}{Proposition}[section]
\newcommand{\bt}{\begin{theorem}}
\newcommand{\bl}{\begin{lemma}}
\newcommand{\el}{\end{lemma}}
\newcommand{\et}{\end{theorem}}
\newcommand{\br}{\begin{remark}}
\newcommand{\er}{\end{remark}}
\newcommand{\la}{\label}
\newcommand{\bn}{\begin{eqnarray}}
\newcommand{\en}{\end{eqnarray}}
\newcommand{\bnn}{\begin{eqnarray*}}
\newcommand{\enn}{\end{eqnarray*}}
\newcommand{\ba}{\begin{aligned}}
\newcommand{\ea}{\end{aligned}}
\newcommand{\be}{\begin{equation}}
\newcommand{\ee}{\end{equation}}
\def\p{\partial}
\def\norm[#1]#2{\|#2\|_{#1}}
\def\la{\label}
\def\R{\mathbb{R}}
\begin{document}

\title{On some new global existence result of 3D Magnetohydrodynamic equations}

\author{Cheng He}
\address{Division of Mathematics, Department of Mathematical \& Physical Sciences, National Natural Science Foundation of
China, 100085,  People's Republic of China}
\email{hecheng@nsfc.gov.cn}

\author{Xiangdi Huang}
\address{NCMIS, Academy of Mathematics and Systems Science, CAS, Beijing 100190, P. R. China \ \ $\&$\ \ Department of Pure and Applied Mathematics, Graduate School of Information Sciences and Technology, Osaka University, Osaka, Japan}
\email{xdhuang@amss.ac.cn}

\author{Yun Wang}
\address{Department of Mathematics, Soochow University, 1 Shizi Street, Suzhou 215006, P.R. China}
\email{ywang3@suda.edu.cn}
\date{}

\thanks{The research of X.D.Huang is partially supported by NSFC 11101392. The research of Y. Wang is partially supported by NSFC 11241004.}

\maketitle

\begin{abstract} 
This paper  is devoted to the incompressible Magenetohydrodynamic equations in $\R^3$. We prove that if the difference between the magnetic field and the velocity is small initially then it will remain forever, thus results in global strong solution without smallness restriction on the size of initial velocity or magnetic field.  In other words, magnetic field can indeed regularize the Navier-Stokes equations, due to cancelation.

Keywords: Magnetohydrodynamics equations, global strong solution.

AMS: 35Q35, 35B65, 76N10
\end{abstract}

\vspace{5mm} \section{Introduction} \setcounter{equation}{0}

\vspace{2mm} The present paper is devoted to the study of the incompressible magnetohydrodynamics (MHD) equations
in $\mathbb{R}^3$,
\begin{equation}\label{sys}
\left\{\begin{array}{l} \displaystyle\frac{\partial u}{\partial t}
-\frac{1}{Re}
\Delta u + (u\cdot\nabla)u - S(B\cdot \nabla )B + \nabla (P+\frac{S}{2}|B|^2)  = 0,\\
[3mm] \displaystyle\frac{\partial B}{\partial t}
-\frac{1}{Rm} \Delta B + (u\cdot\nabla )B - (B\cdot\nabla )u = 0,\\
[2mm] {\rm div}~u = 0,~~~~{\rm div }~B = 0.
\end{array}\right.
\end{equation}
with the following initial conditions
 \begin{equation}\label{sys-2}
\left\{\begin{array}{l}u(x,0)=u_0(x)\ ,\\[2mm]
B(x,0)=B_0(x).
\end{array}\right.\end{equation}
Here $u, P, B$ are
non-dimensional quantities corresponding to the velocity of the
fluid, its pressure and the magnetic field, respectively. The
non-dimensional number $Re>0$ is the Reynolds number, $Rm >0 $ is
the magnetic Reynolds number and $S = M^2/(ReRm)$ with $M$ being the
Hartman number. For simplicity of writing, we can assume that $S=1$,
otherwise, let $\widetilde{B}(x,t)=\sqrt{S}B(x,t)$. And let $P$
denote the total pressure $P + S|B|^2/2$.

The MHD system (1.1) was studied by G. Duvaut and J.-L. Lions \cite{DL}. They established local existence and uniqueness of a solution in the classical Sobolev spaces $H^s(\R^N), s \ge N$.
But whether this unique local solution can exist globally for large initial data is a challenging open problem in mathematical fluid mechanics. Later, Sermange and Temam \cite{ST} showed the regularity for
weak solutions in the case of three dimension under the assumption
that $(u, B)$ belongs to $L^\infty(0, T; H^1(\mathbb{R}^3))$. The other
kinds of regularity criteria are established in \cite{CKS, wu00, wu02, wu04} and references therein. On the other hand,
Duvant-Lions\cite{DL} proved also global existence of the  strong solution for small initial data. For some extension, refer to \cite{MYZ, WD}. It should be noted that all these global existence results of smooth solutions require that both the velocity filed  and magnetic filed be sufficiently small.
This is mainly because that the MHD
equations share the same nonlinear convection structure as that of
the incompressible Navier-Stokes equations. However,  the relation between velocity field and magnetic field in the existence theory is not clear.

Recently, some efforts have been made to characterize the different roles
played by the velocity field and magnetic field  in the regularity
of weak solutions. Partial developments are achieved and partial
results are obtained in this direction.  Namely, it was
shown in \cite{he2} that a weak solution $(u, B)$ is smooth
provided the velocity field satisfies any one of the following
assumptions: 1) $u \in L^p(0, T; L^q(\R^3))$ with $1/p + 3/2q \leq
1/2$  for $q > 3$; 2)~$u \in C([0, T]; L^3(\R^3))$; 3)~$\nabla u \in
L^\alpha(0, T; L^\beta(\R^3))$  with $1/\alpha + 3/2\beta \leq 1$
for $3 \leq \beta < \infty$; 4)~Let $\omega(x, t) = \mbox{curl}~u(x,
t)$. There exist some positive constants $K$, $M$ and $\rho$, such that
$$\big|\omega(x+y, t) - \omega(x, t)\big| \leq K\big|\omega(x+y,
t)\big||y|^{1\over 2}$$ holds for any $t \in [0, T]$, if both
$|y|\leq \rho$ and $|\omega(x+y, t)| \geq M$. This result was then
generalized, for more references, see \cite{hw, KK, LD, wang, zhou}.
These regularity criteria imply that the velocity field of the fluid
seems to play a dominant role in the theory of regularity of weak
solutions in some sense.

On the other hand,  there is some evidence indicating that the magnetic field should have some dissipation, due to the numerical simulations of Politano et in \cite{PPS} and
the observations of space and laboratory plasmas alike in \cite{H}. Then
the solutions to the incompressible magnetohydrodynamic equations
should exhibit a greater degree of regularity than does an ordinary
incompressible Navier-Stokes equations, in some sense. In this direction, Bardos-Sulem-Sulem\cite{BSS} first established the global strong classical solution for the invscid MHD equations with strong magenetic field. Inspired by the results in \cite{H, PPS}, we study the concelation between the velocity field and the magnetic field. 

First, reformulate the equation (\ref{sys}) using Elsasser's variables $W^+,W^-$ as follows,
$$W^+=u+B,\ \ \ W^-=u-B,\ \ \ W^+_0=u_0+B_0,\ \ W^-_0=u_0-B _0.$$
Then MHD equations (\ref{sys}-(\ref{sys-2}) can be re-written as:
\begin{equation}\label{sys-new}
\left\{\begin{array}{l} \displaystyle\frac{\partial W^+}{\partial t}
-\kappa
\Delta W^+ - \lambda\Delta W^- + (W^-\cdot\nabla)W^+ + \nabla P = 0,\\
[3mm] \displaystyle\frac{\partial W^-}{\partial t}
-\kappa \Delta W^- -\lambda\Delta W^+ + (W^+\cdot\nabla )W^- +\nabla P = 0,\\
[2mm] {\rm div}~W^+ = 0,~~~~{\rm div }~W^- = 0.
\end{array}\right.
\end{equation}
with the following initial conditions \begin{equation}\label{sys-new-1}
\left\{\begin{array}{l}W^+(x,0)=W^+_0(x)\ ,\\[2mm]
W^-(x,0)=W^-_0(x).\end{array}\right.\end{equation} Here $$\kappa  =
\frac{1}{2Re}+\frac{1}{2Rm}, \  \  \lambda =
\frac{1}{2Re}-\frac{1}{2Rm}.$$
Note that
\be
\kappa>|\lambda|.
\ee
In this paper, we will show that magnetic field can regularize the Navier-Stokes equations. More precisely, there exists a unique global strong solution to 3D MHD equations with large initial velocity as long as $\frac{|\lambda|}{\kappa}\ll 1$ and the magnetic field is comparable to the velocity initially.

 \vspace{5mm}

\section{Main Result}
\setcounter{equation}{0} \vspace{2mm}

Before stating our main results, we introduce some function spaces.
Let $C_{0,\sigma}^\infty(\mathbb{R}^3)$ denote the set of all
$C^\infty$ vector-valued functions $\phi$ with compact support in $\mathbb{R}^3$,
such that $\mbox{div}\phi=0$. $L^\beta(\R^3), W^{k, \beta}(\R^3)$$(1\leq \beta \leq \infty)$, $H^s(\R^3)(s >0 )$  are
the standard Sobolev spaces. The fractional-order homogeneous Sobolev
space $\dot{H}^s(\mathbb{R}^3)$ $(s>0)$ is defined as the space of tempered
distributions $u$ over $\mathbb{R}^3$  for which the Fourier transform
$\mathcal{F}u$ belongs to $L_{loc}^1(\mathbb{R}^3)$ and which satisfy
\be \nonumber
    \|u\|_{\dot{H}^s(\mathbb{R}^3)}^2 := \int_{\mathbb{R}^3}
    |\xi|^{2s} |\mathcal{F} u(\xi)|^2 d\xi < \infty ~.
\ee
Let $L^\beta_\sigma(\mathbb{R}^3)$ and 
$\dot{H}^{\frac12}_\sigma(\R^3)$ be the closure of
$C_{0,\sigma}^\infty(\mathbb{R}^3)$ with the respect to the
 $L^\beta$-norm and $\dot{H}^{\frac12}$-norm.

This paper is devoted to the existence of global strong solutions to the problem (\ref{sys})-(\ref{sys-2}). We provide a new smallness condition on the initial data $W^-_0(x)$ and  $\frac{|\lambda|}{\kappa}$ rather than the initial velocity and magnetic field $(u_0,B_0)$.

To illustrate our main idea in a clear way, we study a special case first, for which $Re=Rm$, and prove the following theorem. 
\begin{theorem}\la{t1}
Let $(W^+_0,W^-_0) \in L_{\sigma}^3(\R^3)$ and $\lambda=0$.  Then there exist two generic constants $\epsilon_0$
and $C_0$, independent of initial data and $\kappa$, such that if
\be\la{condition-01}
\kappa^{-3} \|W^-_0\|_{L^3}^3 \cdot \exp \left\{ C_0 \kappa^{-3} \|W^+_0\|_{L^3}^3\right\}< \epsilon_0\, ,
\ee
or
\be\la{condition-02}
\kappa^{-3} \|W^+_0\|_{L^3}^3 \cdot \exp \left\{ C_0 \kappa^{-3} \|W^-_0\|_{L^3}^3\right\}< \epsilon_0\, ,
\ee
then the system (\ref{sys-new})-(\ref{sys-new-1}) admits a global strong solution $$(W^+,W^-)\in C([0, \infty), L_{\sigma}^3(\R^3))\cap
C((0,\infty), W^{2,3}(\R^3)).$$
\end{theorem}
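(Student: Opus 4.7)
The plan is to combine a Fujita--Kato local existence at the scaling-critical level with an $L^3$ a priori estimate that exploits the Els\"asser cross-advection, and to close with a continuation-bootstrap argument driven by \eqref{condition-01}.

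First, I would obtain a maximal strong solution $(W^+,W^-)\in C([0,T_*);L^3_\sigma)\cap C((0,T_*);W^{2,3})$ from a fixed-point iteration on the Duhamel formulation
\[
W^\pm(t)=e^{\kappa t\Delta}W^\pm_0 - \int_0^t e^{\kappa(t-s)\Delta}\mathbb{P}\bigl[(W^\mp\cdot\nabla)W^\pm\bigr]\,ds,
\]
using heat-kernel $L^p$--$L^q$ smoothing. The cross-advection $(W^\mp\cdot\nabla)W^\pm$ has the same scaling as the classical Navier--Stokes nonlinearity, so the standard iteration furnishes the blow-up criterion that $\|W^\pm\|_{L^3}\to\infty$ as $t\to T_*$ if $T_*<\infty$. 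The task is thus to propagate a uniform $L^3$-bound under \eqref{condition-01}.

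Second, I test each equation in \eqref{sys-new} against $|W^\pm|W^\pm$. The convective terms vanish by $\operatorname{div} W^\mp=0$, and the pressure, which satisfies $\Delta P=-\partial_i\partial_j(W^+_iW^-_j)$, is controlled by Calder\'on--Zygmund via $\|P\|_{L^{3/2}}\le C\|W^+\|_{L^3}\|W^-\|_{L^3}$. After one integration by parts in the pressure term, H\"older with the identity $|W^\pm|^{1/2}\nabla|W^\pm|=\tfrac{2}{3}\nabla|W^\pm|^{3/2}$ and Young's inequality yield
\[
\frac{d}{dt}\|W^\pm\|_{L^3}^3 + c\kappa\|\nabla|W^\pm|^{3/2}\|_{L^2}^2 \le \frac{C}{\kappa}\|W^\mp\|_{L^3}^2\,\|W^\pm\|_{L^3}^3.
\]

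Third, set $K=C_0\kappa^{-3}\|W^+_0\|_{L^3}^3$ and, assuming \eqref{condition-01}, define
\[
T^*=\sup\bigl\{T\le T_*:\|W^-(t)\|_{L^3}^3\le 2\|W^-_0\|_{L^3}^3\, e^K\ \text{ for all }\ t\in[0,T]\bigr\}.
\]
On $[0,T^*]$, the bound $\kappa^{-3}\|W^-\|_{L^3}^3\le 2\epsilon_0$ makes the forcing in the $W^+$ inequality small; combining this with the Sobolev lower bound $\|\nabla|W^+|^{3/2}\|_{L^2}^2\gtrsim\|W^+\|_{L^9}^3$ and Gagliardo--Nirenberg interpolation produces $\int_0^{T^*}\|W^+\|_{L^3}^2\,ds\le C\kappa^{-2}\|W^+_0\|_{L^3}^3$. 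Substituting into the Gr\"onwall form of the $W^-$ inequality yields $\|W^-(t)\|_{L^3}^3\le\|W^-_0\|_{L^3}^3\exp(C\kappa^{-3}\|W^+_0\|_{L^3}^3)$, strictly improving the defining bound of $T^*$; a continuity argument forces $T^*=T_*$, and the uniform $L^3$-bound combined with the blow-up criterion forces $T_*=+\infty$. Case \eqref{condition-02} is identical by the $W^+\leftrightarrow W^-$ symmetry when $\lambda=0$.

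The main obstacle is obtaining $\int_0^{T^*}\|W^+\|_{L^3}^2\,ds\lesssim\|W^+_0\|_{L^3}^3/\kappa^2$ purely at scaling-critical level: the dissipation naturally controls $\int\|W^+\|_{L^9}^3\,ds$, and on $\mathbb{R}^3$ there is no embedding between these two space--time norms, so the passage must essentially use the smallness of $\|W^-\|_{L^3}$ inside the $W^+$ inequality. Getting the constants sharp enough that the output $\exp(C\kappa^{-3}\|W^+_0\|_{L^3}^3)$ reproduces exactly the exponential appearing in \eqref{condition-01} is the technical heart of the argument.
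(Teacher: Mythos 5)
Your proposal has the right skeleton (testing with $|W^\pm|W^\pm$, Calder\'on--Zygmund for the pressure, a bootstrap continuation), but its quantitative core is false. The central differential inequality you claim,
\begin{equation*}
\frac{d}{dt}\|W^\pm\|_{L^3}^3 + c\,\kappa\,\bigl\|\nabla |W^\pm|^{3/2}\bigr\|_{L^2}^2 \;\le\; \frac{C}{\kappa}\,\|W^\mp\|_{L^3}^2\,\|W^\pm\|_{L^3}^3,
\end{equation*}
cannot be derived by any H\"older/Sobolev/Young manipulation, because it is not scaling-consistent: under the parabolic rescaling $W^\pm_\mu(x,t)=\mu W^\pm(\mu x,\mu^2 t)$, which preserves the system, the hypotheses, and every $L^3$ norm, the left-hand side acquires a factor $\mu^2$ while the right-hand side is invariant; letting $\mu\to\infty$ would force the dissipative inequality with zero right-hand side for every solution, which the pressure term prevents. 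The same scaling defect kills the bound you need in your third step, $\int_0^{T^*}\|W^+\|_{L^3}^2\,ds\le C\kappa^{-2}\|W^+_0\|_{L^3}^3$ (the left side scales like $\mu^{-2}$, the right side is invariant; applying it to $W^+_\mu$ and letting $\mu\to 0$ would force $W^+\equiv 0$). You correctly flag this as ``the main obstacle,'' but it is not a technical sharpening of constants --- no Gr\"onwall scheme whose coefficient is the critical quantity $\|W^+\|_{L^3}^2$ can close, since a uniform-in-time $L^3$ bound only makes $\int_0^T\|W^+\|_{L^3}^2\,dt$ grow linearly in $T$.

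The paper escapes this by pairing the norms differently in the pressure estimate: instead of $\|P\|_{L^{3/2}}\le C\|W^+\|_{L^3}\|W^-\|_{L^3}$ (two critical norms), it uses $\|P\|_{L^{9/4}}\le C\|W^-\|_{L^3}\|W^+\|_{L^9}$, i.e., the critical norm of $W^-$ times the \emph{subcritical} $L^9$ norm of $W^+$, which the dissipation controls via $\|W^+\|_{L^9}^{3/2}\le C\|\nabla(|W^+|^{3/2})\|_{L^2}$. Then (i) the $W^+$ estimate closes with no Gr\"onwall at all, by absorbing the pressure term into the dissipation using only the bootstrap smallness of $\kappa^{-1}\sup_t\|W^-\|_{L^3}$, giving $\kappa\int_0^T\|\nabla(|W^+|^{3/2})\|_{L^2}^2\,dt\le\|W^+_0\|_{L^3}^3$; and (ii) the $W^-$ estimate becomes $\frac{d}{dt}\|W^-\|_{L^3}^3\le C\kappa^{-2}\|W^-\|_{L^3}^3\,\|\nabla(|W^+|^{3/2})\|_{L^2}^2$, whose coefficient is now time-integrable with total mass $\le C\kappa^{-3}\|W^+_0\|_{L^3}^3$ --- this is exactly the exponential in \eqref{condition-01}, and it is scaling-consistent. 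Finally, your continuation step also invokes a criterion you do not have: a Kato-type iteration in the critical space does \emph{not} furnish ``$\|W^\pm(t)\|_{L^3}\to\infty$ as $t\to T_*$'' (that is an Escauriaza--Seregin--\v{S}ver\'ak-type theorem, and a uniform critical bound does not by itself restart the local theory, whose existence time is not controlled by the $L^3$ norm alone). The paper instead continues the solution through the Ladyzhenskaya--Prodi--Serrin-type bound $W^+\in L^3(0,T;L^9)$, citing the MHD criterion of He--Wang, and that space--time bound is precisely what the dissipation estimate in (i) provides.
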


\vspace{2mm}\begin{remark}
Condition (\ref{condition-01}) holds for
\be\la{w-1}
W^-_0(x) = 0\, ,
\ee
while condition (\ref{condition-02}) holds for
\be\la{w-2}
W^+_0(x) = 0\, .
\ee

Let's say a few words on this special case. Taking (\ref{w-1}) for example, one can easily deduce that
\be
W^+(x,t) =\nabla P=0\, ,
\ee
and $W^-(x,t)$ is a solution to the following heat equations
\be\la{stokes}
\p_tW^- -\kappa\triangle W^-=0\, .
\ee
 There is no doubt that \eqref{stokes} has a global solution, which is smooth when $t > 0$.
\end{remark}

\begin{remark}
Very recently, a similar version of Theorem \ref{t1} was indicated in Lei-Lin-Zhou's paper\cite{LLZ}. 
\end{remark}

Next, we generalize the same idea to a general case, where $Re \neq Rm$.
\begin{theorem}\la{t2}
Let $(W^+_0,W^-_0) \in \dot{H}^{\frac12}_{\sigma}(\R^3)$.  Then there exist two generic positive constants $\epsilon_0<\frac12$ and $C_0$,  independent of initial data, $\kappa$ and $\lambda$, such that if
\be\la{condition2-01}
 \left( \kappa^{-2} \|W^-_0\|_{\dot{H}^{\frac12}}^2 +  \frac{\lambda^2}{\kappa^2} \left( \kappa^{-2}  \|W^+_0\|_{\dot{H}^{\frac12}}^2 + \frac{\lambda^2}{\kappa^2}\right)\right) \exp \left\{C_0 \left(\kappa^{-4} \|W^+_0\|_{\dot{H}^{\frac12}}^4 + \frac{\lambda^4}{\kappa^4}\right)\right\}< \epsilon_0 \,
\ee
or
\be\la{condition2-02}
\left( \kappa^{-2} \|W^+_0\|_{\dot{H}^{\frac12}}^2 +  \frac{\lambda^2}{\kappa^2} \left( \kappa^{-2}  \|W^-_0\|_{\dot{H}^{\frac12}}^2 + \frac{\lambda^2}{\kappa^2}\right)\right) \exp \left\{C_0 \left(\kappa^{-4} \|W^-_0\|_{\dot{H}^{\frac12}}^4 + \frac{\lambda^4}{\kappa^4}\right)\right\}< \epsilon_0 \, ,
\ee
then the system (\ref{sys-new})-(\ref{sys-new-1}) admits a global strong solution $$(W^+,W^-)\in C([0, \infty), \dot{H}^{\frac12}_{\sigma}(\R^3))\cap
C((0,\infty), \dot{H}^2 (\R^3)).$$
\end{theorem}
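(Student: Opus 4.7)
\textbf{Proof plan for Theorem \ref{t2}.} The strategy is to combine local well-posedness in the critical space $\dot{H}^{\frac12}_\sigma$ with coupled a priori energy estimates that propagate the smallness of $W^-$, then conclude by continuation. Local existence of a maximal strong solution $(W^+,W^-)\in C([0,T^\ast);\dot{H}^{\frac12}_\sigma)\cap L^2_{\mathrm{loc}}(0,T^\ast;\dot{H}^{\frac32})$ for (\ref{sys-new})--(\ref{sys-new-1}) follows from a Fujita--Kato type fixed point argument: the cross-dissipation $\lambda\Delta W^\mp$ is linear and dominated by the main dissipation since $\kappa>|\lambda|$, while the quadratic term $(W^\mp\!\cdot\nabla)W^\pm$ has the same critical scaling as the 3D Navier--Stokes nonlinearity. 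It then suffices to rule out finite $T^\ast$ under (\ref{condition2-01}); condition (\ref{condition2-02}) follows by the $W^+\leftrightarrow W^-$ symmetry of (\ref{sys-new}).

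Testing (\ref{sys-new}) against $\Lambda W^\pm$ with $\Lambda=(-\Delta)^{\frac12}$ annihilates the pressure and yields
\begin{equation*}
\tfrac12\tfrac{d}{dt}\|W^\pm\|_{\dot{H}^{\frac12}}^2+\kappa\|W^\pm\|_{\dot{H}^{\frac32}}^2 = -\lambda\langle\Lambda^{\frac32}W^\mp,\Lambda^{\frac32}W^\pm\rangle-\langle (W^\mp\!\cdot\nabla)W^\pm,\Lambda W^\pm\rangle.
\end{equation*}
The cross-dissipation is absorbed as $\tfrac{\kappa}{4}\|W^\pm\|_{\dot{H}^{\frac32}}^2+C\tfrac{\lambda^2}{\kappa}\|W^\mp\|_{\dot{H}^{\frac32}}^2$. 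The nonlinearity admits two useful bookkeepings, both valid in $\mathbb{R}^3$ via H\"older and the critical embeddings $\dot{H}^{\frac12}\hookrightarrow L^3$, $\dot{H}^1\hookrightarrow L^6$. The \emph{linear bookkeeping} $|\langle (W^\mp\!\cdot\nabla)W^\pm,\Lambda W^\pm\rangle|\leq C\|W^\mp\|_{\dot{H}^{\frac12}}\|W^\pm\|_{\dot{H}^{\frac32}}^2$ (triple-$L^3$ H\"older plus Sobolev) is absorbable into the $\kappa$-dissipation as soon as $\|W^\mp\|_{\dot{H}^{\frac12}}\lesssim\kappa$. The \emph{Gronwall bookkeeping} $\tfrac{\kappa}{8}\|W^\pm\|_{\dot{H}^{\frac32}}^2+\tfrac{C}{\kappa^3}\|W^\mp\|_{\dot{H}^{\frac12}}^2\|W^\pm\|_{\dot{H}^{\frac12}}^2\|W^\mp\|_{\dot{H}^{\frac32}}^2$ is obtained from an $L^6\cdot L^2\cdot L^3$ H\"older split, interpolation of $\dot{H}^1$ between $\dot{H}^{\frac12}$ and $\dot{H}^{\frac32}$, and Young's inequality.

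Under (\ref{condition2-01}) we treat $W^-$ as small and $W^+$ as large. In the $W^+$ equation we apply the linear bookkeeping, and as long as $\|W^-\|_{\dot{H}^{\frac12}}\leq c\kappa$ this produces the linear-looking bound $\|W^+\|_{\dot{H}^{\frac12}}^2(t)+\tfrac{\kappa}{2}\int_0^t\!\|W^+\|_{\dot{H}^{\frac32}}^2 ds\leq\|W^+_0\|_{\dot{H}^{\frac12}}^2+C\tfrac{\lambda^2}{\kappa}\int_0^t\!\|W^-\|_{\dot{H}^{\frac32}}^2 ds$. In the $W^-$ equation the advecting $W^+$ is large, so the Gronwall bookkeeping must be used, leading after Gronwall's inequality to
\begin{equation*}
\|W^-\|_{\dot{H}^{\frac12}}^2(t)\leq\Bigl(\|W^-_0\|_{\dot{H}^{\frac12}}^2+C\tfrac{\lambda^2}{\kappa}\!\!\int_0^t\!\|W^+\|_{\dot{H}^{\frac32}}^2 ds\Bigr)\exp\Bigl\{\tfrac{C}{\kappa^3}\!\!\int_0^t\!\|W^+\|_{\dot{H}^{\frac12}}^2\|W^+\|_{\dot{H}^{\frac32}}^2 ds\Bigr\}.
\end{equation*}
Feeding the $W^+$ bound into this inequality gives $\int_0^t\|W^+\|_{\dot{H}^{\frac12}}^2\|W^+\|_{\dot{H}^{\frac32}}^2 ds\leq C\kappa^{-4}\|W^+_0\|_{\dot{H}^{\frac12}}^4$ plus $\lambda$-corrections; iterating once to resolve the mutual $\lambda^2/\kappa^2$-coupling (which converges since $\lambda/\kappa<1$) reproduces exactly the right-hand side of (\ref{condition2-01}) up to the constant $C_0$. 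Defining $T_1\leq T^\ast$ as the largest time on which $\|W^-\|_{\dot{H}^{\frac12}}\leq c\kappa$, the smallness $\epsilon_0$ in (\ref{condition2-01}) is chosen so that the resulting bound gives the strict improvement $\|W^-(T_1)\|_{\dot{H}^{\frac12}}<c\kappa/2$, contradicting maximality unless $T_1=T^\ast=\infty$. The principal technical obstacle is closing this $\lambda$-coupled $2\times 2$ system for the dissipation integrals $\int\|W^\pm\|_{\dot{H}^{\frac32}}^2 ds$; the smallness of $\lambda^2/\kappa^2$ is precisely what guarantees its invertibility, and the correction terms this produces account for the $\lambda^2/\kappa^2$ and $\lambda^4/\kappa^4$ entries in (\ref{condition2-01}). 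Parabolic smoothing of the dissipative system then upgrades the solution to $C((0,\infty);\dot{H}^2)$ for $t>0$.
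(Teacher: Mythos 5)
Your overall route is the same as the paper's: a priori $\dot{H}^{\frac12}$ energy estimates treating $W^+$ as large and $W^-$ as small, absorption of the nonlinearity in the $W^+$ equation once $\|W^-\|_{\dot{H}^{\frac12}}\lesssim\kappa$, a Gronwall bound for $W^-$ whose exponent is $\kappa^{-3}\int\|W^+\|_{\dot{H}^{\frac12}}^2\|W^+\|_{\dot{H}^{\frac32}}^2\,dt$, and a continuation argument. Your two ``bookkeepings'' are exactly the estimates of the paper's Lemma \ref{le2-2} (the paper first rescales via \eqref{change}, $V^\pm(x,t)=\kappa^{-1}W^\pm(x,\kappa^{-1}t)$, which is cosmetic). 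The gap lies in how you close the $\lambda$-coupling, and it is precisely the point where the paper's setup differs from yours.

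Your continuation hypothesis controls only $\sup_t\|W^-\|_{\dot{H}^{\frac12}}$, and you propose to recover the dissipation integrals $X=\int_0^T\|W^+\|_{\dot{H}^{\frac32}}^2\,dt$ and $Y=\int_0^T\|W^-\|_{\dot{H}^{\frac32}}^2\,dt$ by ``inverting'' a $2\times2$ system, ``iterating once'', with convergence credited to $\lambda^2/\kappa^2<1$. But that system is not linear. Your $W^+$ estimate gives $\kappa X\lesssim A_Y$ and $\sup_t\|W^+\|_{\dot{H}^{\frac12}}^2\lesssim A_Y$, where $A_Y:=\|W^+_0\|_{\dot{H}^{\frac12}}^2+\tfrac{\lambda^2}{\kappa}Y$; substituting into the $W^-$ Gronwall bound yields
\[
\kappa Y\;\lesssim\;\Bigl(\|W^-_0\|_{\dot{H}^{\frac12}}^2+\tfrac{\lambda^2}{\kappa^2}A_Y\Bigr)\exp\bigl\{C\kappa^{-4}A_Y^2\bigr\},
\]
in which the unknown $Y$ sits inside the exponential. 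When $\lambda\neq0$ (the main new content of Theorem \ref{t2} over Theorem \ref{t1}; for $\lambda=0$ the right-hand side is a constant and your scheme does close), such a self-consistent inequality is vacuous for large $Y$, since its right side grows superlinearly in $Y$: no finite iteration or matrix inversion extracts a bound. One must instead run a continuity argument on $Y$ itself, using that $Y(T)$ is continuous with $Y(0)=0$ and that, under the smallness condition, the admissible set $\{y\ge0:\kappa y\lesssim F(y)\}$ has a bounded component containing $0$. This is exactly what the paper builds in from the start by choosing the bootstrap quantity \eqref{at2}, $A^-(T)=\sup_t\|V^-\|_{\dot{H}^{\frac12}}^2+\int_0^T\|V^-\|_{\dot{H}^{\frac32}}^2\,dt$: under $A^-(T)\le2\epsilon_0<1$, the cross-diffusion contribution to the $V^+$ equation is bounded a priori by $\tfrac{\lambda^2}{\kappa^2}\cdot2\epsilon_0\le\tfrac{\lambda^2}{\kappa^2}$, so \eqref{cru2-1} closes with a purely data-dependent right-hand side, \eqref{cru2-2} then does as well, and \eqref{condition2-01} improves $2\epsilon_0$ to $\epsilon_0$. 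If you enlarge your bootstrap hypothesis from $\|W^-\|_{\dot{H}^{\frac12}}\le c\kappa$ to the analogue of $A^-(T)\le2\epsilon_0$ (sup norm plus dissipation integral), your argument becomes correct and essentially identical to the paper's.
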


\vspace{2mm}
\begin{remark}
Either condition (\ref{condition2-01}) or (\ref{condition2-02}) automatically implies
\be
\frac{|\lambda|}{\kappa}\le \epsilon_0^{\frac{1}{4}}\ll 1,
\ee
which corresponds to $\frac{|Re-Rm|}{Re+Rm}\ll 1$ in original MHD system.  Indeed, in astrophysical magnetic phenomena, both the Reynolds number and the magnetic Reynolds number are huge and the difference between $Re$ and $Rm$ is not critical. In this aspect our assumption is reasonable.
\end{remark}

\begin{remark}
From mathematical point of view, it's important to study the regularizing effect of magnetic field to the Navier-Stokes equations. As shown in Theorem 2.1-2.2, one type of regularizing effects is due to cancelation. Consequently,  synchronous diffusion speed is a good candidate for maintaining the effect of cancelation all the time which turns out to ask $\frac{|Re-Rm|}{Re+Rm}\ll 1$.
\end{remark}

\begin{remark}
There is no smallness condition imposed on the initial velocity, it indicates that one can generate solutions that are smooth for all the time $t>0$ for the large initial velocity as long as magnetic field and velocity are comparable initially and $\frac{|\lambda|}{\kappa}\ll 1$. In other words, magnetic field can indeed regularize the Navier-Stoke equations.
\end{remark}

\begin{remark}
In fact, Theorem 2.2 can not cover Theorem 2.1 completely. We require the initial data belongs to $\dot{H}^{\frac12}(\R^3)$, instead of $L^3(\R^3)$. Hence it is left open whether a real generalized version of Theorem 2.1 can be derived. 
\end{remark}

\begin{remark}
  Due to the symmetric structure of the system (\ref{sys-new}), we need only to prove Theorems \ref{t1} and \ref{t2} under condition (\ref{condition-01})and (\ref{condition2-01}) respectively. 
\end{remark}

\section{Proof of Theorem \ref{t1}}

Since $(W^+_0, W^-_0) \in L^3_\sigma(\R^3)$ is equivalent to $(u_0, B_0) \in L^3_\sigma (\R^3)$, it is well-known, that
there are $T_0 > 0$ and a unique strong solution $(u, B)$ to the
MHD equations in $(0, T_0]$ and the solution is classical when $t>0$. Hence, in
the following, we assume that the solution $(W^+, W^-)$ is sufficiently
smooth on $[0, T]$ and deduce the uniform a priori strong estimates
under the assumptions of Theorem \ref{t1}, which guarantee the extension of the local strong solution.

Here and thereafter, $C, C_1$ will denote a generic constant which is independent of $\lambda$, $\kappa$, the  initial data $(W^+_0,W^-_0)$ and time $T$.

\subsection{A priori Estimates}
Given a strong solution $(W^+ , W^-)$ on $\mathbb{R}^3 \times [0, T]$ for $T<T_0$, define
\be\la{at}
A^-(T) = \kappa^{-3}\sup_{0\leq t\leq T} \|W^-(t)\|_{L^3}^3.
\ee
Actually, we have the following proposition.

\begin{pro}\label{pro}
Let $(W^+, W^-)$ be a strong solution to
\eqref{sys-new}-\eqref{sys-new-1} in $[0,T]\times\R^3$, then there
exist two positive constants $\epsilon_0$ and $C_0$,  such that if
\be\label{assumption}
   A^-(T)  \leq 2\epsilon_0\, ,
\ee
then it in fact holds that
\[
  A^-(T)  \leq \epsilon_0 \ \ \ \mbox{and}\ \ \ \|W^+\|_{L^3(0, T; L^9)} \leq C \kappa^{-\frac13} \|W^+_0\|_{L^3}\, ,
\]
provided
\be\la{condition-1}
\kappa^{-3} \|W^-_0\|_{L^3}^3 \cdot\exp{\{ C_0 \kappa^{-3} \|W^+_0\|_{L^3}^3\}}  \leq \epsilon_0\, .
\ee
\end{pro}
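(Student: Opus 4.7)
The plan is a bootstrap argument. Assuming $A^-(T)\le 2\epsilon_0$, the goal is to improve this to $A^-(T)\le \epsilon_0$ and simultaneously to obtain the bound on $\|W^+\|_{L^3(0,T;L^9)}$. First I would derive $L^3$-type energy inequalities for each Elsasser variable. Testing the $W^\pm$-equation against $|W^\pm|W^\pm$, the convective term vanishes because $\div W^\mp=0$:
$$\int (W^\mp\cdot\nabla)W^\pm\cdot|W^\pm|W^\pm\,dx=\tfrac{1}{3}\int W^\mp\cdot\nabla|W^\pm|^3\,dx=0.$$
The diffusion produces $\kappa\int|W^\pm|(|\nabla W^\pm|^2+|\nabla|W^\pm||^2)\,dx$, which via the Sobolev embedding $\dot H^1\hookrightarrow L^6$ applied to $|W^\pm|^{3/2}$ dominates $c\kappa\|W^\pm\|_{L^9}^3$. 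Hence
$$\tfrac{d}{dt}\|W^\pm\|_{L^3}^3+c\kappa\|W^\pm\|_{L^9}^3\le C\left|\int\nabla P\cdot|W^\pm|W^\pm\,dx\right|.$$

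Next, the pressure integrals must be controlled. Taking $\div$ of either equation yields $-\Delta P=\partial_i\partial_j(W^+_iW^-_j)$, so by Calder\'on--Zygmund $\|P\|_{L^p}\le C\|W^+\otimes W^-\|_{L^p}$ for $1<p<\infty$. Integrating by parts and applying H\"older together with the pointwise identity $|W^\pm|^{1/2}|\nabla|W^\pm||=\tfrac{2}{3}|\nabla|W^\pm|^{3/2}|$ gives
$$\left|\int\nabla P\cdot|W^\pm|W^\pm\,dx\right|=\left|\int P\,W^\pm\cdot\nabla|W^\pm|\,dx\right|\le C\|P\|_{L^3}\|W^\pm\|_{L^3}^{1/2}\|\nabla|W^\pm|^{3/2}\|_{L^2},$$
and $\|P\|_{L^3}$ is further interpolated through $\|W^\pm\|_{L^6}\le\|W^\pm\|_{L^3}^{1/4}\|W^\pm\|_{L^9}^{3/4}$. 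A first application of Young's inequality absorbs the $\|\nabla|W^\pm|^{3/2}\|_{L^2}$-factor into the dissipation, leaving a coupling term polynomial in the $L^3$- and $L^9$-norms of $W^+$ and $W^-$.

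Finally I would close the bootstrap by a two-stage Gronwall. Plugging the hypothesis $\|W^-\|_{L^\infty L^3}^3\le 2\epsilon_0\kappa$ into the $W^+$-inequality and choosing $\epsilon_0$ small enough, a second Young's inequality splits the residual mixed $\|W^+\|_{L^9}\|W^-\|_{L^9}$ products, so that the $W^+$-inequality closes to
$$\|W^+\|_{L^\infty(0,T;L^3)}^3+\kappa\|W^+\|_{L^3(0,T;L^9)}^3\le C\|W^+_0\|_{L^3}^3,$$
which is the second desired conclusion. Substituting this back into the $W^-$-inequality yields a linear Gronwall whose coefficient, after H\"older in time against the $L^\infty L^3$- and $L^3L^9$-bounds on $W^+$ just obtained, integrates to a multiple of $\kappa^{-3}\|W^+_0\|_{L^3}^3$, producing
$$\|W^-\|_{L^\infty(0,T;L^3)}^3\le\|W^-_0\|_{L^3}^3\cdot\exp\{C_0\kappa^{-3}\|W^+_0\|_{L^3}^3\}.$$
The smallness assumption \eqref{condition-1} then forces $A^-(T)\le\epsilon_0$, strictly improving the bootstrap hypothesis. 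The main obstacle is the careful bookkeeping of exponents in the H\"older--interpolation--Young chain for the pressure: the absorption constants from Young's inequality have to be balanced against $\|W^+\|_{L^\infty L^3}\le C\|W^+_0\|_{L^3}$ and $\|W^+\|_{L^3L^9}^3\le C\kappa^{-1}\|W^+_0\|_{L^3}^3$ so that the Gronwall factor scales \emph{precisely} as $\kappa^{-3}\|W^+_0\|_{L^3}^3$, thereby matching the scaling of the smallness assumption \eqref{condition-1} rather than producing a worse dimensionless quantity like $\kappa^{-4}\|W^+_0\|_{L^3}^4$.
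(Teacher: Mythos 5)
Your setup coincides with the paper's: testing with $|W^\pm|W^\pm$, the vanishing of the convection term, a Calder\'on--Zygmund bound for the pressure, and the Sobolev embedding turning the dissipation into control of $\|W^\pm\|_{L^9}$. The genuine gap is in how you estimate the pressure. You bound $\|P\|_{L^3}\le C\|W^+\|_{L^6}\|W^-\|_{L^6}$ and then interpolate \emph{symmetrically}, $\|W^\pm\|_{L^6}\le\|W^\pm\|_{L^3}^{1/4}\|W^\pm\|_{L^9}^{3/4}$. This places a factor $\|W^-\|_{L^9}^{3/4}\le C\|\nabla(|W^-|^{3/2})\|_{L^2}^{1/2}$, i.e.\ $W^-$-\emph{dissipation}, into the $W^+$ energy inequality. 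Carrying out your first Young step, the residual term is $C\kappa^{-3}\|W^+\|_{L^3}^{3}\,\|W^-\|_{L^3}\,\|\nabla(|W^-|^{3/2})\|_{L^2}^{2}$: the $W^-$ dissipation multiplied by the \emph{large} quantity $\|W^+\|_{L^3}^3$. No ``second Young's inequality'' and no choice of $\epsilon_0$ closes this. The only place such a term can be absorbed is the left-hand side of the $W^-$ inequality (after summing the two inequalities), which forces $C\kappa^{-3}\|W^+\|_{L^3}^3\|W^-\|_{L^3}\le c\kappa$, i.e.\ $\kappa^{-3}\|W^+\|_{L^3}^3\lesssim\epsilon_0^{-1/3}$ --- an extra smallness restriction on the velocity, whereas the Proposition must hold with $\kappa^{-3}\|W_0^+\|_{L^3}^3$ arbitrarily large (condition \eqref{condition-1} only penalizes it inside the exponential). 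Relatedly, even where your scheme runs, the Gronwall coefficient in the $W^-$ estimate comes out as $\kappa^{-3}\|W^+\|_{L^3}\|\nabla(|W^+|^{3/2})\|_{L^2}^2$, which integrates to $\kappa^{-4}\|W_0^+\|_{L^3}^4$ --- exactly the ``worse dimensionless quantity'' you flag at the end. This is not a bookkeeping issue curable within your splitting; it is forced by the factor $\|W^+\|_{L^3}^{1/4}$ that your symmetric interpolation puts into the pressure.

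The cure, and the paper's one essential move, is an \emph{asymmetric} H\"older estimate for the pressure, used in \emph{both} energy inequalities: $\|P\|_{L^{9/4}}\le C\|W^-\otimes W^+\|_{L^{9/4}}\le C\|W^-\|_{L^3}\|W^+\|_{L^9}$ (this is \eqref{ww-4}), i.e.\ all of the $L^3$ weight goes on the small variable $W^-$ and all of the $L^9$ (dissipation) weight goes on $W^+$. Then the right-hand side of the $W^+$ inequality is bounded by $C_1\|W^-\|_{L^3}$ times pure $W^+$-dissipation, which the bootstrap hypothesis $\|W^-\|_{L^3}\le(2\epsilon_0)^{1/3}\kappa$ absorbs outright with no restriction whatsoever on $W^+$, giving \eqref{cru-1} and hence the $L^3(0,T;L^9)$ bound. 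In the $W^-$ inequality the same pressure bound yields, after Young, $\frac{d}{dt}\|W^-\|_{L^3}^3\le C\kappa^{-2}\|W^-\|_{L^3}^3\|\nabla(|W^+|^{3/2})\|_{L^2}^2$, whose Gronwall factor integrates, by \eqref{cru-1}, to precisely $\exp\{C_0\kappa^{-3}\|W_0^+\|_{L^3}^3\}$, matching \eqref{condition-1} and closing the bootstrap. (Minor additional slip: your bootstrap hypothesis should read $\|W^-\|_{L^\infty(0,T;L^3)}^3\le 2\epsilon_0\kappa^3$, not $2\epsilon_0\kappa$.)
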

The remainder of this subsection consists in proving this key result.


\begin{lemma}\la{le-2}
Let $(W^+, W^-)$ be a strong solution to \eqref{sys-new}-\eqref{sys-new-1} in $[0,T]\times\R^3$. There exist two positive constant $\epsilon_0$ and $C_0$, such that if
\be \la{assumption-1}
A^-(T)\le 2\epsilon_0,
\ee
then it holds that
\be\la{cru-1}
\sup_{0\leq t\leq T} \|W^+(t)\|_{L^3}^3 + \kappa \int_0^T \left( \left\| | W^+|^{\frac12} \nabla W^+ \right\|_{L^2}^2 + \left\| \nabla \left(|W^+|^{\frac32} \right)\right\|_{L^2}^2 \right)\, dt
\leq \|W_0^+\|_{L^3}^3,
\ee
and
\be\la{cru-2}
\sup_{0\leq t \leq T}\|W^-(t)\|_{L^3}^3 \leq \|W^-_0\|_{L^3}^3 e^{C_0 \kappa^{-3} \|W^+_0\|_{L^3}^3}.
\ee
\end{lemma}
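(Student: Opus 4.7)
The plan is to run an $L^3$ energy estimate on each equation of the Elsasser system (\ref{sys-new}), exploiting the cancellation coming from the divergence-free cross-transport. Multiplying the first equation by $|W^+|W^+$ and integrating, the convection $\int (W^-\cdot\nabla)W^+\cdot|W^+|W^+\,dx$ vanishes thanks to $\mathrm{div}\,W^-=0$, the Laplacian produces $\kappa\||W^+|^{1/2}\nabla W^+\|_{L^2}^2+\tfrac{4\kappa}{9}\|\nabla(|W^+|^{3/2})\|_{L^2}^2$ by a direct computation, and the pressure term is rewritten via integration by parts (using $\mathrm{div}\,W^+=0$) as $\int P\,(W^+\cdot\nabla|W^+|)\,dx$, yielding the identity
\[
\tfrac{1}{3}\tfrac{d}{dt}\|W^+\|_{L^3}^3+\kappa\||W^+|^{1/2}\nabla W^+\|_{L^2}^2+\tfrac{4\kappa}{9}\|\nabla(|W^+|^{3/2})\|_{L^2}^2=\int P\,(W^+\cdot\nabla|W^+|)\,dx,
\]
and the symmetric identity for $W^-$, with a common pressure $P$.

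The crux is bounding the pressure RHS. From the Calder\'on-Zygmund representation $P=R_iR_j(W^-_jW^+_i)$ one has $\|P\|_{L^3}\le C\|W^-W^+\|_{L^3}$; choosing the asymmetric H\"older split $\|W^-W^+\|_{L^3}\le\|W^-\|_{L^9}\|W^+\|_{L^{9/2}}$, interpolating $\|W^+\|_{L^{9/2}}\le\|W^+\|_{L^3}^{1/2}\|W^+\|_{L^9}^{1/2}$, and using the Sobolev embedding $\|W^\pm\|_{L^9}^{3/2}\le C\|\nabla(|W^\pm|^{3/2})\|_{L^2}$, I would bound the pressure contribution by a multiple of $\|W^-\|_{L^9}\|W^+\|_{L^3}\|\nabla(|W^+|^{3/2})\|_{L^2}^{4/3}$. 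Young's inequality then absorbs a fraction of $\kappa\|\nabla(|W^+|^{3/2})\|_{L^2}^2$ into the LHS and produces the differential inequality
\[
\tfrac{d}{dt}\|W^+\|_{L^3}^3+c\kappa\|\nabla(|W^+|^{3/2})\|_{L^2}^2\le C\kappa^{-2}\|W^-\|_{L^9}^3\,\|W^+\|_{L^3}^3,\qquad(\dagger_+)
\]
together with the analogous $(\dagger_-)$ for $W^-$.

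To establish (\ref{cru-1}), I would integrate $(\dagger_-)$ over $[0,T]$, use the standing hypothesis $\sup_{[0,T]}\|W^-\|_{L^3}^3\le 2\epsilon_0\kappa^3$ together with Sobolev to bound $\int_0^T\|W^-\|_{L^9}^3\,dt\le C\kappa^{-1}\|W^-_0\|_{L^3}^3+C\epsilon_0\int_0^T\|W^+\|_{L^9}^3\,dt$, and substitute this into the integrated $(\dagger_+)$. A standard continuity/bootstrap on $\sup_{[0,T]}\|W^+\|_{L^3}$ then closes provided the feedback is absorbable, which is ensured by (\ref{condition-1}): the exponential factor forces $\kappa^{-3}\|W^-_0\|_{L^3}^3$ to be super-exponentially small in $\kappa^{-3}\|W^+_0\|_{L^3}^3$, drowning every coupling term and yielding (\ref{cru-1}). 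Once (\ref{cru-1}) is in hand, Sobolev embedding gives $\int_0^T\|W^+\|_{L^9}^3\,dt\le C\kappa^{-1}\|W^+_0\|_{L^3}^3$, and a direct Gronwall application to $(\dagger_-)$ immediately produces (\ref{cru-2}).

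The main obstacle is the coupling produced by the common pressure in $(\dagger_\pm)$: the natural H\"older/Sobolev estimate of the pressure term in the $W^+$ inequality inevitably involves $\|\nabla(|W^-|^{3/2})\|_{L^2}$, which is absent from the LHS of $(\dagger_+)$, so the $W^+$ and $W^-$ energy inequalities must be closed simultaneously. The smallness of $W^-$ built into (\ref{condition-1}) is exactly what breaks this circular coupling and makes the bootstrap work.
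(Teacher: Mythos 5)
Your $L^3$ energy identities, the vanishing of the convection terms, the Calder\'on--Zygmund bound for $P$, and your treatment of \eqref{cru-2} are all sound, and the last of these matches the paper. The genuine gap is in how you estimate the pressure term in the $W^+$ inequality, namely the H\"older split $\|P\|_{L^3}\le C\|W^-\|_{L^9}\|W^+\|_{L^{9/2}}$. This places $W^-$ in the $L^9$ norm, a dissipation-type quantity that the standing hypothesis $A^-(T)\le2\epsilon_0$ does \emph{not} control pointwise in time (it only controls $\sup_t\|W^-\|_{L^3}$). Consequently your $(\dagger_+)$ carries the term $C\kappa^{-2}\|W^-\|_{L^9}^3\|W^+\|_{L^3}^3$, which can only be handled through $\int_0^T\|W^-\|_{L^9}^3\,dt$, i.e.\ through the $W^-$ dissipation --- this is exactly the circular coupling you acknowledge, and it cannot be broken the way you propose. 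Feeding $\int_0^T\|W^-\|_{L^9}^3\,dt\le C\kappa^{-1}\|W_0^-\|_{L^3}^3+C\epsilon_0\int_0^T\|W^+\|_{L^9}^3\,dt$ back into the integrated $(\dagger_+)$ produces a feedback term of size
\begin{equation*}
C\,\epsilon_0\,\kappa^{-3}\Bigl(\sup_{t\le T}\|W^+\|_{L^3}^3\Bigr)\cdot\kappa\int_0^T\bigl\|\nabla\bigl(|W^+|^{\frac32}\bigr)\bigr\|_{L^2}^2\,dt\, ,
\end{equation*}
and absorbing it into the left-hand side requires $\epsilon_0\,\kappa^{-3}\sup_t\|W^+\|_{L^3}^3\lesssim 1$. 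But $\kappa^{-3}\|W_0^+\|_{L^3}^3$ may be arbitrarily large --- that is the entire point of the theorem --- and \eqref{condition-1} makes only $\|W_0^-\|_{L^3}$ small, never $\|W_0^+\|_{L^3}$; so no choice of $\epsilon_0$ and no bootstrap hypothesis on $\sup_t\|W^+\|_{L^3}$ closes the argument.

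The paper avoids the coupling altogether by the opposite split: from \eqref{ww-4}, $\|P\|_{L^{9/4}}\le C\|W^-\|_{L^3}\|W^+\|_{L^9}$, which puts $W^-$ in exactly the norm made small by the hypothesis ($\|W^-\|_{L^3}\le(2\epsilon_0)^{1/3}\kappa$) and puts $W^+$ in the dissipation norm with total power two. The entire pressure contribution in the $W^+$ estimate is then bounded by $C_1\|W^-\|_{L^3}\bigl(\|\nabla(|W^+|^{3/2})\|_{L^2}^2+\||W^+|^{1/2}\nabla W^+\|_{L^2}^2\bigr)$ and is absorbed outright into the left-hand side once $(2\epsilon_0)^{1/3}\lesssim 1/C_1$, with no Gronwall factor, no reference to the $W^-$ dissipation, and no use of \eqref{condition-1}. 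Two structural features of the statement confirm that your route cannot be repaired: first, the Lemma assumes only $A^-(T)\le2\epsilon_0$, so \eqref{condition-1} is not available inside its proof (it enters only later, in Proposition \ref{pro}, to close the continuity argument); second, \eqref{cru-1} is asserted with constant exactly $1$ on the right-hand side, which any Gronwall-based control of the pressure term can never deliver, since it always leaves an exponential factor strictly greater than $1$.
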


\begin{proof} {\it Step 1.}\  Multiplying the first equation of (\ref{sys-new}) by
$ 3 |W^+| W^+$ and integrating over $\R^3$, we get
\be\label{ww-0} \ba
&\frac{d}{dt} \|W^+\|_{L^3}^3 + 3 \kappa \int |W^+| |\nabla W^+|^2\, dx + \frac43 \kappa \int \left| \nabla \left(|W^+|^{\frac32}\right) \right|^2\, dx \\
& \leq  3 \int \left| P\cdot {\rm div} \left( |W^+| W^+\right)  \right| \, dx\\
& \leq  C \|P\|_{L^{\frac94}} \|W^+\|_{L^9}^{\frac12} \left\| |W^+|^{\frac12} \nabla W^+ \right\|_{L^2}.
\ea \ee
Note that the equation for $W^+$ can be written as
\be \la{ww-3}
(\partial_t W^+ -\kappa \Delta W^+) + \nabla P=  - {\rm div} (W^- \otimes W^+),
\ee
where the left-hand side is viewed as the Helmholtz-Weyl decomposition of the right one.
From this equation, one has
\be\la{ww-4}
\ba
\|P\|_{L^{\frac94}} & \leq C \|W^- \otimes W^+\|_{L^{\frac94}}\\
& \leq  C \|W^-\|_{L^3} \|W^+\|_{L^9}.
\ea
\ee

Insert the estimate \eqref{ww-4} into \eqref{ww-0}, then
\be \la{ww-5} \ba
&\frac{d}{dt} \|W^+\|_{L^3}^3 + 3 \kappa \int |W^+| |\nabla W^+|^2\, dx + \frac43 \kappa \int \left| \nabla \left(|W^+|^{\frac32}\right) \right|^2\, dx \\
&\leq C \|W^-\|_{L^3} \|W^+\|_{L^9}^{\frac32} \left\| |W^+|^{\frac12} \nabla W^+ \right\|_{L^2}\\
& \leq  C \|W^-\|_{L^3} \left\|  |W^+|^{\frac32} \right\|_{L^6} \left\| |W^+|^{\frac12} \nabla W^+ \right\|_{L^2} \\
& \leq C \|W^-\|_{L^3} \left\|\nabla \left(|W^+|^{\frac32}\right)\right\|_{L^2} \left\| |W^+|^{\frac12} \nabla W^+ \right\|_{L^2}\\
& \leq  C_1 \|W^-\|_{L^3} \left\|\nabla \left(|W^+|^{\frac32}\right)\right\|_{L^2}^2 + C_1  \|W^-\|_{L^3} \left\| |W^+|^{\frac12} \nabla W^+ \right\|_{L^2}^2
\ea
\ee
where the Sobolev embedding inequality and Young's inequality were used.

 Now if we choose $\epsilon_0$ small enough such that
\be \la{ww-6}
A^-(T)^{\frac13} \leq \left(2\epsilon_0\right)^{\frac13} \leq \kappa/(3C_1),
\ee
then \eqref{cru-1} follows immediately.

{\it Step 2.} Multiplying the second equation of (\ref{sys-new}) by $3|W^-|W^-$,
with the help of integration by parts, we have
\be\la{ww-8}
\ba
&\frac{d}{dt}\|W^-\|_{L^3}^3 +  3 \kappa \int |W^-| |\nabla W^-|^2\, dx + \frac43 \kappa \int \left| \nabla \left(|W^-|^{\frac32}\right) \right|^2\, dx \\
& \leq  C \|P \|_{L^{\frac94}} \| W^-\|_{L^9}^{\frac12} \left\| |W^-|^{\frac12} \nabla W^- \right\|_{L^2}.
\ea
\ee
Substituting the estimate \eqref{ww-4} into \eqref{ww-8} and employing the Sobolev embedding inequality, we have
\be \la{ww-9} \ba
&\frac{d}{dt}\|W^-\|_{L^3}^3 +  3 \kappa \int |W^-| |\nabla W^-|^2\, dx + \frac43 \kappa \int \left| \nabla \left(|W^-|^{\frac32}\right) \right|^2\, dx \\
& \leq C \|W^-\|_{L^3} \| W^+ \|_{L^9} \| W^-\|_{L^9}^{\frac12} \left\| |W^-|^{\frac12} \nabla W^- \right\|_{L^2}\\
&\leq C \|W^-\|_{L^3} \left\| \nabla \left(|W^+|^{\frac32}\right)\right\|_{L^2}^{\frac23}  \left\| \nabla \left(|W^-|^{\frac32}\right)\right\|_{L^2}^{\frac13} \left\| |W^-|^{\frac12} \nabla W^- \right\|_{L^2}.
\ea
\ee
One can easily deduce from \eqref{ww-9} after using Young's inequality that
\be\la{ww-10}
\frac{d}{dt}\|W^-\|_{L^3}^3  \leq C \kappa^{-2} \| W^-\|_{L^3}^3 \left\| \nabla \left(|W^+|^{\frac32}\right)\right\|_{L^2}^2.
\ee

Hence, in view of \eqref{cru-1}, one has
\be\la{ww-11}
\sup_{0\leq t\leq T} \|W^-(t)\|_{L^3}^3 \leq \|W^-_0\|_{L^3}^3 e^{C_0 \kappa^{-3} \|W_0^+\|_{L^3}^3} .
\ee
This finishes the proof of Lemma \ref{le-2}.
\end{proof}

It follows from Lemma \ref{le-2} that
\be A^-(T) \leq \kappa^{-3} \|W^-_0\|_{L^3}^3 e^{C_0 \kappa^{-3} \|W_0^+\|_{L^3}^3} \leq \epsilon_0,
\ee
if  \eqref{condition-1} holds.  And also the estimate
\be
\|W^+\|_{L^3(0, T; L^9)} \leq  C \kappa^{-\frac13} \|W^+_0\|_{L^3}.
\ee
is implied in Lemma \ref{le-2}, more precisely, \eqref{cru-1}.
Hence we finishes the proof of Proposition \ref{pro}.

\subsection{Proof of Theorem \ref{t1} }
With the a priori estimates in previous subsection in hand, we are prepared
for the proof of Thorem~\ref{t1}.
\begin{proof}
In view of classical results, there exists a $T_*>0$ such
that the Cauchy problem \eqref{sys-new}-\eqref{sys-new-1} has a unique local
strong solution $( W^+, W^-)$ on $\mathbb{R}^3 \times (0, T_*]$. We will show that this local solution can be extended to a global one provided condition (\ref{condition-01}) holds.

Since the local strong solution is continous in $L^3$,  there exists a $T_1 \in (0, T_*)$ such that \eqref{assumption} holds for $T= T_1$. So we set
\[
   \bar{T}= \sup \{ T |\ (W^+, W^-) \ \mbox{is a strong solution on}\
     \mathbb{R}^3 \times (0, T] \ \mbox{and}\  A^-(T)\leq 2\epsilon_0 \} ~,
\]
and 
\[
T^* = \sup\{ T| \ (W^+, W^-)\ \mbox{ is a strong solution on } \ \R^3 \times (0, T]\}~.
\]
Obviously, $\bar{T} \leq T^*.$ However, in fact it follows from Proposition \ref{pro} that
\be
\bar{T} = T^*,
\ee
if condition \eqref{condition-01} holds.
 We claim that $T^* = \infty$, for which we will argue  by contradiction. Suppose $T^* < \infty$, as proved in Proposition \ref{pro},
\be
\sup_{0\leq T < T^*}\|W^+ \|_{L^3(0, T; L^9)} \leq C \kappa^{-\frac13} \|W_0^+\|_{L^3}^3,
\ee
which guarantees that the local solution will not blow up at $T^*$, according to the blowup criterion in \cite{hw2} . Hence it contradicts to the definition of $T^*$.

\end{proof}

\section{Proof of Theorem \ref{t2} }
The main idea of the proof for Theorem \ref{t2} is the same as that of Theorem \ref{t1}. The computations and techniques here are a bit more complicated. Since $(W^+_0, W^-_0) \in \dot{H}^{\frac{1}{2}}_\sigma(\R^3)$ is equivalent to $(u_0, B_0) \in \dot{H}^{\frac{1}{2}}_\sigma(\R^3)$, 
there are $T_0 > 0$ and a unique strong solution $(u, B)$ to the
MHD equations in $(0, T_0]$ and the solution is classical when $t>0$. Hence, in
the following, we assume that the solution $(W^+, W^-)$ is sufficiently
smooth on $[0, T]$ and deduce the uniform a priori strong estimates
under the assumptions of Theorem \ref{t2}, which guarantee the extension of the local strong solution.

For simplicity of writing, we first scale $W^+, W^-, P$ to $V^+, V^-, \tilde{P}$ as follows,
\begin{equation}\la{change}
\left\{\begin{array}{l}
V^+(x, t) = \kappa^{-1}W^+(x, \kappa^{-1} t)\ ,\\[2mm]
V^-(x, t) = \kappa^{-1} W^-(x, \kappa^{-1} t)\ ,\\[2mm]
\tilde{P}(x, t)= \kappa^{-2}P (x, \kappa^{-1} t).
\end{array}\right.\end{equation}

Then the system for $(V^+, V^-, \tilde{P})$ becomes
\be\la{sys-new-new} \left\{
\ba &  \displaystyle\frac{\partial V^+}{\partial t}
- \Delta V^+ - \frac{\lambda}{\kappa} \Delta V^- + (V^-\cdot\nabla)V^+ + \nabla \tilde{P}  = 0,\\
& \displaystyle\frac{\partial V^-}{\partial t}
- \Delta V^- -\frac{\lambda}{\kappa}\Delta V^+ + (V^+\cdot\nabla )V^- +\nabla \tilde{P} = 0,\\
& {\rm div}~V^+ = {\rm div }~V^- = 0.
\ea \right.
\ee
with the following initial conditions
\begin{equation}\label{sys-new-new-1}
\left\{\begin{array}{l}V^+(x,0)=\kappa^{-1} W^+_0(x)\ ,\\[2mm]
V^-(x,0)=\kappa^{-1}W^-_0(x).\end{array}\right.\end{equation}

Given a strong solution $(V^+ , V^-, \tilde{P})$ in $\mathbb{R}^3 \times [0, T]$ for $T<T_0$, define
\be\la{at2}
A^-(T) = \sup_{0\leq t\leq T} \|V^-(t, \cdot)\|_{\dot{H}^{\frac12}}^2 + \int_0^T \|V^-(t, \cdot)\|_{\dot{H}^{\frac32}}^2 \, dt .
\ee
Similarly, we have the following proposition.
\begin{pro}\label{pro2}
Let $(V^+, V^-, \tilde{P})$ be a strong solution to \eqref{sys-new-new}-\eqref{sys-new-new-1} in $\R^3 \times [0, T]$, then there exist two positive constants $\epsilon_0< \frac12 $ and $C_0$, such that if
\be\label{assumption2}
   A^-(T)  \leq 2\epsilon_0\, ,
\ee
then it in fact holds that
\[
  A^-(T)  \leq \epsilon_0\, ,
\]
provided
\be\la{condition2-1}
\ba
&\left( \kappa^{-2} \|W^-_0\|_{\dot{H}^{\frac12}}^2 +  \frac{\lambda^2}{\kappa^2} \left( \kappa^{-2}  \|W^+_0\|_{\dot{H}^{\frac12}}^2 + \frac{\lambda^2}{\kappa^2}\right)\right)\\ & \cdot \exp \left\{C_0 \left(\kappa^{-4} \|W^+_0\|_{\dot{H}^{\frac12}}^4 + \frac{\lambda^4}{\kappa^4}\right)\right\}< \epsilon_0 \, .
\ea
\ee
\end{pro}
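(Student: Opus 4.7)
The plan is to mirror the proof of Proposition~\ref{pro}, replacing the $L^3$ energy by the critical $\dot H^{1/2}$ energy. Since $\dot H^{1/2}(\R^3)\hookrightarrow L^3(\R^3)$ preserves the correct scaling, the Els\"asser cancellation mechanism --- the convective derivative in the $V^+$ equation is governed by the \emph{small} field $V^-$, and vice versa --- remains available. The only genuinely new feature compared with Theorem~\ref{t1} is the coupling produced by the cross-diffusion terms $\pm(\lambda/\kappa)\Delta V^\mp$, which is responsible for the $\lambda/\kappa$-dependent structure in the smallness condition.

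\textbf{Step 1 (control of $V^+$).} Set $\Lambda=(-\Delta)^{1/2}$, apply $\Lambda^{1/2}$ to the first equation of \eqref{sys-new-new} and pair with $\Lambda^{1/2}V^+$. The pressure drops out by divergence-freeness of $V^+$, and I obtain
\[
\frac{1}{2}\frac{d}{dt}\|V^+\|_{\dot H^{1/2}}^2 + \|V^+\|_{\dot H^{3/2}}^2 = -\frac{\lambda}{\kappa}\langle V^-,V^+\rangle_{\dot H^{3/2}} -\int (V^-\cdot\nabla)V^+\cdot\Lambda V^+\,dx.
\]
The cross-diffusion term is dominated by $\tfrac{1}{4}\|V^+\|_{\dot H^{3/2}}^2+C(\lambda/\kappa)^2\|V^-\|_{\dot H^{3/2}}^2$ via Cauchy--Schwarz and Young. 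For the nonlinearity, H\"older and the embedding $\dot H^{1/2}\hookrightarrow L^3$ yield
\[
\Big|\int(V^-\cdot\nabla)V^+\cdot\Lambda V^+\,dx\Big|\le \|V^-\|_{L^3}\|\nabla V^+\|_{L^3}\|\Lambda V^+\|_{L^3}\le C\|V^-\|_{\dot H^{1/2}}\|V^+\|_{\dot H^{3/2}}^2.
\]
Under $A^-(T)\le 2\epsilon_0$ with $\epsilon_0$ so small that $C\sqrt{2\epsilon_0}\le 1/4$, this is absorbed into the dissipation, and integrating gives
\[
\sup_{[0,T]}\|V^+\|_{\dot H^{1/2}}^2+\int_0^T\|V^+\|_{\dot H^{3/2}}^2\,dt \le \|V^+_0\|_{\dot H^{1/2}}^2+C(\lambda/\kappa)^2 A^-(T).
\]

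\textbf{Step 2 (control of $V^-$ and Gronwall closure).} Performing the same $\dot H^{1/2}$ computation on the second equation, the cross-diffusion term again yields $\tfrac{1}{4}\|V^-\|_{\dot H^{3/2}}^2+C(\lambda/\kappa)^2\|V^+\|_{\dot H^{3/2}}^2$. The nonlinearity $(V^+\cdot\nabla)V^-$ cannot use smallness of $V^+$, so I switch the H\"older split and combine it with Gagliardo--Nirenberg $\|f\|_{\dot H^1}\le C\|f\|_{\dot H^{1/2}}^{1/2}\|f\|_{\dot H^{3/2}}^{1/2}$:
\[
\Big|\int(V^+\cdot\nabla)V^-\cdot\Lambda V^-\,dx\Big|\le \|V^+\|_{L^6}\|\nabla V^-\|_{L^2}\|\Lambda V^-\|_{L^3}\le C\|V^+\|_{\dot H^{1/2}}^{1/2}\|V^+\|_{\dot H^{3/2}}^{1/2}\|V^-\|_{\dot H^{1/2}}^{1/2}\|V^-\|_{\dot H^{3/2}}^{3/2}.
\]
Young's inequality bounds the right side by $\tfrac{1}{4}\|V^-\|_{\dot H^{3/2}}^2+C\|V^+\|_{\dot H^{1/2}}^2\|V^+\|_{\dot H^{3/2}}^2\|V^-\|_{\dot H^{1/2}}^2$, producing a Gronwall-type inequality for $\|V^-\|_{\dot H^{1/2}}^2$. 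Using Step~1 together with $A^-(T)\le 2\epsilon_0<1$ gives $\int_0^T\|V^+\|_{\dot H^{3/2}}^2\,dt\le \|V^+_0\|_{\dot H^{1/2}}^2+C(\lambda/\kappa)^2$ and, by taking the supremum out of the product,
\[
\int_0^T\|V^+\|_{\dot H^{1/2}}^2\|V^+\|_{\dot H^{3/2}}^2\,dt\le C\bigl(\|V^+_0\|_{\dot H^{1/2}}^4+(\lambda/\kappa)^4\bigr).
\]
Substituting back through $V^\pm_0=\kappa^{-1}W^\pm_0$ reproduces exactly the left-hand side of \eqref{condition2-1}, so the assumption \eqref{condition2-1} forces $A^-(T)\le \epsilon_0$.

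\textbf{Main obstacle.} The decisive technical issue is the $V^-$ estimate, where $V^+$ may be large and the Step~1 style bound $C\|V^+\|_{\dot H^{1/2}}\|V^-\|_{\dot H^{3/2}}^2$ cannot be absorbed into the dissipation. The correct move is to replace the $L^3\!\cdot\! L^3\!\cdot\! L^3$ split by $L^6\!\cdot\! L^2\!\cdot\! L^3$ and then use the interpolation $\dot H^1=[\dot H^{1/2},\dot H^{3/2}]_{1/2}$. This shifts half a derivative off the highest-order $V^-$ factor, leaving $\|V^-\|_{\dot H^{3/2}}^{3/2}$ for Young's absorption and a coefficient that is linear in $\|V^-\|_{\dot H^{1/2}}^2$ --- precisely what Gronwall needs. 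A secondary bookkeeping point, which must be handled carefully in order to get the exact form of \eqref{condition2-1}, is to track at every step the $(\lambda/\kappa)^2$ factors generated by the cross-diffusion coupling between the $V^+$ and $V^-$ equations.
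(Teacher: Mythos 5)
Your proposal is correct and follows essentially the same route as the paper: the identical $\dot H^{1/2}$ energy estimates for $V^+$ and $V^-$, absorption of the $V^+$-equation nonlinearity via smallness of $A^-(T)$, the $L^6\cdot L^2\cdot L^3$ split with the interpolation $\|f\|_{\dot H^1}\le C\|f\|_{\dot H^{1/2}}^{1/2}\|f\|_{\dot H^{3/2}}^{1/2}$ for the $V^-$ equation, and Gronwall with coefficient $\int_0^T\|V^+\|_{\dot H^{1/2}}^2\|V^+\|_{\dot H^{3/2}}^2\,dt$ controlled by Step 1. The only (immaterial) difference is that you interpolate $\|V^+\|_{L^6}$ before applying Young's inequality, whereas the paper keeps $\|\nabla V^+\|_{L^2}^4$ in the Gronwall factor and interpolates under the time integral at the end.
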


\begin{lemma}\la{le2-2}
Let $(V^+, V^-)$ be a strong solution to \eqref{sys-new-new}-\eqref{sys-new-new-1} in $\R^3\times [0, T]$.
There exist two positive constants $\epsilon_0<\frac12$ and $C_0$, such that if
\be \la{assumption-1}
A^-(T)\le 2\epsilon_0,
\ee
then it holds that
\be\la{cru2-1}
\sup_{0\leq t\leq T} \|V^+(t)\|_{\dot{H}^{\frac12}}^2 +  \int_0^T \|V^+\|_{\dot{H}^{\frac32}}^2 \, dt
\leq \kappa^{-2} \|W^+_0\|_{\dot{H}^{\frac12}}^2 + \frac{\lambda^2}{\kappa^2}\, .
\ee
and
\be\la{cru2-2} \ba
&\ \ \ \ \ \ \sup_{0\leq t\leq T} \|V^-(t)\|_{\dot{H}^{\frac12}}^2 + \int_0^T \|V^-\|_{\dot{H}^{\frac32}}^2 \, dt\\
&\leq
\left( \kappa^{-2} \|W^-_0\|_{\dot{H}^{\frac12}}^2 +  \frac{\lambda^2}{\kappa^2} \left( \kappa^{-2}  \|W^+_0\|_{\dot{H}^{\frac12}}^2 + \frac{\lambda^2}{\kappa^2}\right)\right) \exp \left\{C_0 \left(\kappa^{-4} \|W^+_0\|_{\dot{H}^{\frac12}}^4 + \frac{\lambda^4}{\kappa^4}\right)\right\}.
\ea \ee
\end{lemma}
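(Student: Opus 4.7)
The plan is to run a scale-critical $\dot H^{\frac12}$ energy estimate on \eqref{sys-new-new}, in parallel with the $L^3$ scheme of Lemma \ref{le-2}. Setting $\Lambda:=(-\Delta)^{1/2}$, I would pair the first and second equations of \eqref{sys-new-new} in $L^2$ with $\Lambda V^+$ and $\Lambda V^-$ respectively. The divergence-free conditions kill the pressure term, and the principal dissipation yields the energy identity
\[
\tfrac12\tfrac{d}{dt}\|V^\pm\|_{\dot H^{\frac12}}^2+\|V^\pm\|_{\dot H^{\frac32}}^2
= -\tfrac{\lambda}{\kappa}\langle V^\mp,V^\pm\rangle_{\dot H^{\frac32}}-\int(V^\mp\!\cdot\!\nabla)V^\pm\!\cdot\Lambda V^\pm\,dx.
\]
The cross-dissipation term is controlled by Cauchy--Schwarz and Young at the cost of $\tfrac14\|V^\pm\|_{\dot H^{\frac32}}^2+\tfrac{\lambda^2}{\kappa^2}\|V^\mp\|_{\dot H^{\frac32}}^2$; the first piece is absorbed by the left-hand-side dissipation, and the second becomes a source that is integrable in $t$ by either \eqref{assumption-1} (when estimating $V^+$) or by \eqref{cru2-1} (when estimating $V^-$).

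For the trilinear convective term I plan to apply H\"older with exponents $(6,3,2)$, the Sobolev embeddings $\dot H^1(\R^3)\hookrightarrow L^6$ and $\|\nabla f\|_{L^3}\le C\|f\|_{\dot H^{\frac32}}$, and the interpolation $\|f\|_{\dot H^1}\le\|f\|_{\dot H^{\frac12}}^{1/2}\|f\|_{\dot H^{\frac32}}^{1/2}$, which together produce
\[
\Bigl|\int(V^\mp\!\cdot\!\nabla)V^\pm\!\cdot\Lambda V^\pm\,dx\Bigr|\le C\|V^\mp\|_{\dot H^{\frac12}}^{1/2}\|V^\pm\|_{\dot H^{\frac12}}^{1/2}\|V^\mp\|_{\dot H^{\frac32}}^{1/2}\|V^\pm\|_{\dot H^{\frac32}}^{3/2}.
\]
A weighted Young inequality with exponents $(4/3,4)$ rearranges this as
\[
\tfrac14\|V^\pm\|_{\dot H^{\frac32}}^2+C\|V^\mp\|_{\dot H^{\frac12}}^2\|V^\pm\|_{\dot H^{\frac12}}^2\|V^\mp\|_{\dot H^{\frac32}}^2,
\]
and assembling this with the cross-dissipation bound produces the master inequality
\[
\tfrac{d}{dt}\|V^\pm\|_{\dot H^{\frac12}}^2+\|V^\pm\|_{\dot H^{\frac32}}^2\le 2\|V^\mp\|_{\dot H^{\frac32}}^2\Bigl(\tfrac{\lambda^2}{\kappa^2}+C\|V^\mp\|_{\dot H^{\frac12}}^2\|V^\pm\|_{\dot H^{\frac12}}^2\Bigr).
\]

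Estimate \eqref{cru2-1} follows from the upper-sign instance: integrate on $[0,T]$ and use $\sup_{[0,T]}\|V^-\|_{\dot H^{\frac12}}^2$ and $\int_0^T\|V^-\|_{\dot H^{\frac32}}^2\,dt$ both bounded by $A^-(T)\le 2\epsilon_0$; a short bootstrap reabsorbs the $C\epsilon_0\sup_{[0,T]}\|V^+\|_{\dot H^{\frac12}}^2$ contribution once $\epsilon_0$ is small, leaving the target $\kappa^{-2}\|W^+_0\|_{\dot H^{\frac12}}^2+\tfrac{\lambda^2}{\kappa^2}$. For \eqref{cru2-2}, the lower-sign instance reads
\[
\tfrac{d}{dt}\|V^-\|_{\dot H^{\frac12}}^2+\|V^-\|_{\dot H^{\frac32}}^2\le C\|V^+\|_{\dot H^{\frac12}}^2\|V^+\|_{\dot H^{\frac32}}^2\|V^-\|_{\dot H^{\frac12}}^2+\tfrac{2\lambda^2}{\kappa^2}\|V^+\|_{\dot H^{\frac32}}^2,
\]
and Gronwall closes the argument because the already proved \eqref{cru2-1} gives
\[
\int_0^T\|V^+\|_{\dot H^{\frac12}}^2\|V^+\|_{\dot H^{\frac32}}^2\,dt\le\sup_t\|V^+\|_{\dot H^{\frac12}}^2\int_0^T\|V^+\|_{\dot H^{\frac32}}^2\,dt\le C\Bigl(\kappa^{-4}\|W^+_0\|_{\dot H^{\frac12}}^4+\tfrac{\lambda^4}{\kappa^4}\Bigr),
\]
which is exactly the exponent in \eqref{cru2-2}, while the linear prefactor comes from $\kappa^{-2}\|W^-_0\|_{\dot H^{\frac12}}^2+\tfrac{2\lambda^2}{\kappa^2}\bigl(\kappa^{-2}\|W^+_0\|_{\dot H^{\frac12}}^2+\tfrac{\lambda^2}{\kappa^2}\bigr)$.

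The main obstacle is arranging the trilinear convection estimate so that the dissipation $\|V^\pm\|_{\dot H^{\frac32}}^2$ is extracted with an absorbable coefficient while the residual depends only on norms that can be treated favorably: for the $V^+$ equation the residual carries $\|V^-\|_{\dot H^{\frac12}}^2$, which is made small by the bootstrap $A^-(T)\le 2\epsilon_0$; for the $V^-$ equation it carries $\|V^+\|_{\dot H^{\frac12}}^2\|V^+\|_{\dot H^{\frac32}}^2$, which is time-integrable by the already established \eqref{cru2-1}. Once this scale-critical splitting is in place the remaining bootstrap (for $V^+$) and Gronwall (for $V^-$) are routine, and the requirement $\epsilon_0<\tfrac12$ in the lemma is precisely what is needed to close the reabsorption.
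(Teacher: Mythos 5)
Your proposal is correct in substance and follows essentially the same strategy as the paper: pair the equations with $(-\Delta)^{1/2}V^{\pm}$, split the cross-diffusion term by Cauchy--Schwarz and Young at the cost of $\tfrac14\|V^\pm\|_{\dot H^{3/2}}^2+\tfrac{\lambda^2}{\kappa^2}\|V^\mp\|_{\dot H^{3/2}}^2$, close the $V^+$ estimate using the smallness $A^-(T)\le 2\epsilon_0$, and close the $V^-$ estimate by Gronwall with exponent $\int_0^T\|V^+\|_{\dot H^{1/2}}^2\|V^+\|_{\dot H^{3/2}}^2\,dt\le C\bigl(\kappa^{-4}\|W_0^+\|_{\dot H^{1/2}}^4+\tfrac{\lambda^4}{\kappa^4}\bigr)$; your treatment of the $V^-$ equation is, up to where the interpolation is performed, identical to the paper's. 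The one genuine deviation is in the $V^+$ equation. The paper does \emph{not} use your symmetric $(6,3,2)$ H\"older splitting there; it uses the $(3,3,3)$ splitting
\[
\Bigl|\int (V^-\!\cdot\!\nabla)V^+\cdot(-\Delta)^{\frac12}V^+\,dx\Bigr|\le \|V^-\|_{L^3}\|\nabla V^+\|_{L^3}^2\le C_1\|V^-\|_{\dot H^{\frac12}}\|V^+\|_{\dot H^{\frac32}}^2,
\]
which is \emph{linear} in the small norm $\|V^-\|_{\dot H^{1/2}}\le(2\epsilon_0)^{1/2}$ and multiplies exactly the dissipation, so it is absorbed pointwise in time with no Gronwall factor and no bootstrap, preserving the coefficient $1$ in front of $\kappa^{-2}\|W_0^+\|_{\dot H^{1/2}}^2$ in \eqref{cru2-1}. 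Your splitting instead leaves the residual $C\|V^-\|_{\dot H^{1/2}}^2\|V^+\|_{\dot H^{1/2}}^2\|V^-\|_{\dot H^{3/2}}^2$, which forces the reabsorption of $C\epsilon_0^2\sup_t\|V^+\|_{\dot H^{1/2}}^2$ in the time-integrated inequality; this works, but it yields \eqref{cru2-1} only up to a factor $(1-C\epsilon_0^2)^{-1}>1$, i.e.\ a slightly weaker statement than the one claimed (harmless for Proposition \ref{pro2} and Theorem \ref{t2}, since all constants there are generic, but not literally the lemma). Relatedly, your closing remark that ``$\epsilon_0<\tfrac12$ is precisely what is needed'' is not accurate: what the argument actually requires is $\epsilon_0\le\tfrac14$ (so that $\tfrac{2\lambda^2}{\kappa^2}A^-(T)\le\tfrac{\lambda^2}{\kappa^2}$) together with smallness of $\epsilon_0$ relative to the trilinear constant (in the paper, $C_1(2\epsilon_0)^{1/2}\le\tfrac14$; in your version, $C\epsilon_0^2$ small enough to reabsorb), and these are the conditions that fix $\epsilon_0$.
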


\begin{proof} {\it Step 1.}\  Multiplying the first equation of (\ref{sys-new-new}) by
$ (-\Delta)^{\frac12} V^+$ and integrating over $\R^3$, we get
\be\label{ww2-0}
\ba
&\frac12 \frac{d}{dt} \|V^+\|_{\dot{H}^{\frac12}}^2  +  \|V^+\|_{\dot{H}^{\frac32}}^2  \\
&=  \frac{\lambda}{\kappa} \int \Delta V^- \cdot (-\Delta)^{\frac12} V^+ \, dx - \int (V^- \cdot \nabla) V^+ \cdot (-\Delta)^{\frac12} V^+ \, dx\\
& \leq \frac{|\lambda|}{\kappa} \|V^-\|_{\dot{H}^{\frac32}} \|V^+\|_{\dot{H}^{\frac32}}  +
\|V^-\|_{L^3} \|\nabla V^+\|_{L^3}^2 \\
& \leq   \frac14 \|V^+\|_{\dot{H}^{\frac32}}^2 +   \frac{\lambda^2}{\kappa^2}\|V^-\|_{\dot{H}^{\frac32}}^2+
C_1 \|V^-\|_{\dot{H}^{\frac12}} \|V^+\|_{\dot{H}^{\frac32}}^2\, ,
\ea
\ee
where we used the Sobolev embedding inequality.

If we choose $\epsilon_0 $ small enough  such that
\be
\epsilon_0^{\frac{1}{2}} \leq \min\left\{ \frac{1}{8C_1}, \frac12 \right\},
\ee
then it holds that
\be \la{ww2-1}
\sup_{0\leq t\leq T} \|V^+\|_{\dot{H}^{\frac12}}^2 + \int_0^T \|V^+\|_{\dot{H}^{\frac32}}^2\, dt
\leq \|V^+_0\|_{\dot{H}^{\frac12}}^2 + \frac{\lambda^2}{\kappa^2}
= \kappa^{-2} \|W^+_0\|_{\dot{H}^{\frac12}}^2 + \frac{\lambda^2}{\kappa^2}\, .
\ee

{\it Step 2.}\ Multiplying the second equation of (\ref{sys-new-new}) by
$ (-\Delta)^{\frac12} V^-$ and integrating over $\R^3$, we get
\be \la{ww2-2} \ba
&\frac12 \frac{d}{dt} \|V^-\|_{\dot{H}^{\frac12}}^2  +  \|V^-\|_{\dot{H}^{\frac32}}^2  \\
&=  \frac{\lambda}{\kappa} \int \Delta V^+ \cdot (-\Delta)^{\frac12} V^- \, dx - \int (V^+ \cdot \nabla) V^- \cdot (-\Delta)^{\frac12} V^- \, dx\\
& \leq \frac{|\lambda|}{\kappa} \|V^-\|_{\dot{H}^{\frac32}} \|V^+\|_{\dot{H}^{\frac32}}  +
\|V^+\|_{L^6} \|\nabla V^-\|_{L^3} \|\nabla V^-\|_{L^2} \\
& \leq   \frac14 \|V^- \|_{\dot{H}^{\frac32}}^2 +   \frac{\lambda^2}{\kappa^2}\|V^+\|_{\dot{H}^{\frac32}}^2+
C \| \nabla V^+\|_{L^2} \|V^-\|_{\dot{H}^{\frac12}}^{\frac12} \|V^-\|_{\dot{H}^{\frac32}}^{\frac32}\\
& \leq \frac12 \|V^- \|_{\dot{H}^{\frac32}}^2 +   \frac{\lambda^2}{\kappa^2}\|V^+\|_{\dot{H}^{\frac32}}^2
+ C \|\nabla V^+\|_{L^2}^4 \|V^-\|_{\dot{H}^{\frac12}}^2.
\ea
\ee
where the interpolation inequality and Sobolev embedding inequality were employed.

It follows from the Gronwall's inequality that
\be \la{ww2-3} \ba
&\sup_{0\leq t\leq T} \|V^-(t)\|_{\dot{H}^{\frac12}}^2 + \int_0^T \|V^-\|_{\dot{H}^{\frac32}}^2 \, dt\\
&\leq e^{C \int_0^T \|\nabla V^+\|_{L^2}^4\, dt} \left( \kappa^{-2} \|W^-_0\|_{\dot{H}^{\frac12}}^2
+ \frac{2\lambda^2}{\kappa^2} \|V^+ \|_{L^2(0, T; \dot{H}^{\frac32})}^2   \right).
\ea \ee

Using the interpolation theory, one has
\be \int_0^T \|\nabla V^+\|_{L^2}^4 \, dt \leq C \|V^+\|_{L^\infty(0, T; \dot{H}^{\frac12})}^2  \|V^+\|_{L^2(0, T; \dot{H}^{\frac32})}^2\, .
\ee
Substitute this inequality into \eqref{ww2-3}, and one can easily deduce after \eqref{ww2-1} that
\be \la{ww2-4} \ba
&\sup_{0\leq t\leq T} \|V^-(t)\|_{\dot{H}^{\frac12}}^2 + \int_0^T \|V^-\|_{\dot{H}^{\frac32}}^2 \, dt\\
& \leq
\left( \kappa^{-2} \|W^-_0\|_{\dot{H}^{\frac12}}^2 +  \frac{\lambda^2}{\kappa^2} \left( \kappa^{-2}  \|W^+_0\|_{\dot{H}^{\frac12}}^2 + \frac{\lambda^2}{\kappa^2} \right)\right) \exp \left\{C \left(\kappa^{-4} \|W^+_0\|_{\dot{H}^{\frac12}}^4 + \frac{\lambda^4}{\kappa^4}\right)\right\},
\ea
\ee
which is our desired estimate \eqref{cru2-2}.

\end{proof}

It follows from Lemma \ref{le2-2} that $A^-(T) \leq \epsilon_0$, if  \eqref{condition2-1} holds. Hence we finish the proof of Proposition \ref{pro2}. The remaining part of the proof for Theorem \ref{t2} follows the same lines as in Section 3, so we omit the details.




\vspace{2mm}


\begin{thebibliography}{100}

\bibitem{BSS}C. Bardos, C. Sulem \& P. L. Sulem, Longtime dynamics of a conductive fluid in the presence of a strong magnetic field, {\it Trans. Amer. Math. Soc.} {\bf 305} (1988), no. 1, 175-191.

\bibitem{CKS}R. E. Caflisch, I. Klapper \& G. Steele, Remarks on singularities,
dimension and energy dissipation for indeal hydrodynamics and MHD,
{\it Comm. Math. Phys.}, {\bf 184}(1997), 443-455.


\bibitem{DL}G. Duraut \& J. L. Lions, In\'{e}quations en thermo\'{e}lasticit\'{e} et
magn\'{e}to-hydrodynamique, {\it Arch. Rational Mech. Anal.}, {\bf
46}(1972), 241-279.

\bibitem{H}A. Hasegawa, Self-organization processes in continuous media, {\it Adv. Phys.}, {\bf 34}(1985)1-42.




\bibitem{he2}C. He \&  Z. Xin, On the regularity of solutions to the magnetohydrodynamic
equations. {\it J. Differential Equations}, {\bf 213 }(2005), no. 2,
235--254.

\bibitem{hw} C. He \& Y. Wang, On the regularity criteria for weak
solutions to the magnetohydrodynamic equations, {\it J. Differential
Equations}, \textbf{238}(2007), 1-17.

\bibitem{hw2}C. He \& Y. Wang, Remark on the regularity for weak solutions to the magnetohydrodynamic equations, {\it Math. Methods Appl. Sci.}, {\bf 31} (2008), no. 14, 1667-1684.

\bibitem{KK}K. Kang \& J. Kim,  Regularity criteria of the magnetohydrodynamic equations in bounded domains or a half space, {\it J. Differential Equations} {\bf 253} (2012), no. 2, 764-794.

\bibitem{LLZ}Z. Lei, F. H. Lin \& Y. Zhou, On a family of large solutions to the incompressible Navier-Stokes equations, preprint.

\bibitem{LD}H. X. Liu \& L. L. Du, Regularity criteria for incompressible magnetohydrodynamics equations in three dimensions, {\it Nonlinearity} {\bf 26} (2013), no. 1, 219-239.

\bibitem{MYZ}C. X. Miao, B. Q. Yuan \& B. Zhang, Well-posedness for the incompressible magneto-hydrodynamic system, {\it Math. Methods Appl. Sci. } {\bf 30} (2007), no. 8, 961-976.

\bibitem{PPS}H. Politano, A. Pouquet \& P.L. Sulem, Current and vorticity dynamics in
three-dimensional magnetohydrodynamics turbulence, {\it Phys.
Plasmas}, {\bf 2}(1995), 2931-2939.

\bibitem{ST}M. Sermange \& R. Temam, Some mathematical questions related to the MHD
equations, {\it Comm. Pure Appl. Math.}, {\bf 36}(1983), 635-664.

\bibitem{WD}K. Y. Wang \& Y. Du,  Stability of the two dimensional magnetohydrodynamic flows in $\R^3$, {\it Discrete Contin. Dyn. Syst. Ser. B} {\bf 17} (2012), no. 3, 1061-1073.

\bibitem{wang}Y. Wang, BMO and the regularity criterion for
weak solutions to the magnetohydrodynamic equations {\it
J.Math.Anal. Appl.}, {\bf 15}(2007), 1082-1086.

\bibitem{wu00}J. Wu, Analytic results related to magneto-hydrodynamic turbulence, {\it Physica D}, {\bf 136} (2000), 353-372.
\bibitem{wu02}J. Wu, Bounds and New Approaches for the 3D MHD Equations, {\it J. Nonlinear Sci.}, {\bf 12}(2002),  395-413.
\bibitem{wu04}J. Wu, Regularity results for weak solutions of the 3D MHD equations, {\it Discrete Contin.
Dyn. Syst.}, {\bf 10}(2004), 543-556.

\bibitem{zhou}Y. Zhou, Remarks on regularities for the 3D MHD equations, {\it Discrete Contin. Dyn. Syst. } {\bf 12} (2005), no. 5, 881-886.



\end{thebibliography}
\end{document}